\documentclass[a4paper,10pt]{article}
\usepackage{amsmath,amssymb,amsfonts,amsthm}
\usepackage{graphicx,color,subfigure,enumerate,multirow}
\usepackage[]{hyperref}
\usepackage{fullpage}

\newcommand\NN{\ensuremath{\mathbb{N}}}
\newcommand\eps{\ensuremath{\varepsilon}}

\newtheorem{thm}{Theorem}
\newtheorem{prop}[thm]{Proposition}

\newtheorem{lem}[thm]{Lemma}

\newtheorem{rmk}[thm]{Remark}

\title{On the parity of the number of nodal domains for an eigenfunction of the Laplacian on tori}
\author{Corentin L\'ena\footnote{Department of Mathematics \emph{Guiseppe Peano}, University of Turin, Via Carlo Alberto, 10, 10123 Turin, Italy 
\texttt{clena@unito.it}}}

\begin{document}
\maketitle
\begin{abstract}
In this note, we discuss  a question posed by T. Hoffmann-Ostenhof (see  \cite{Hof12}) concerning the parity of the number of nodal domains for a non-constant eigenfunction of the Laplacian on flat tori. We present two results. We first show that on the torus $(\mathbb{R}/2\pi\mathbb{Z})^{2}$, a non-constant eigenfunction has  an even number of nodal domains. We then consider the torus $(\mathbb{R}/2\pi\mathbb{Z})\times(\mathbb{R}/2\rho\pi\mathbb{Z})\,$, with $\rho=\frac{1}{\sqrt{3}}$\,, and construct on it an eigenfunction with three nodal domains.
 \end{abstract}

\paragraph{Keywords.}    Laplacian, torus, nodal domains.

\paragraph{MSC classification.}  	35P99, 35J05.


\section{Introduction}
\label{secIntro}

We consider the  non-negative Laplace-Beltrami operator $-\Delta$ on the torus $\mathbb{T}^2_{\rho}=(\mathbb{R}/2\pi\mathbb{Z})\times(\mathbb{R}/2\rho\pi\mathbb{Z})\,$, seen as a two-dimensional Riemannian manifold, with $\rho\in(0,1]\,$. The eigenvalues of $-\Delta$ are given by
\[\lambda_{m,n}=m^2+\frac{n^2}{\rho^2}\,,\]
with $(m,n)\in\NN^2\,$, and an associated basis of eigenfunctions is given, in the standard coordinates, by
\[u_{m,n}^{cc}(x_1,x_2)=\cos(mx_1)\cos\left(\frac{nx_2}{\rho}\right)\,;\]
	\[u_{m,n}^{cs}(x_1,x_2)=\cos(mx_1)\sin\left(\frac{nx_2}{\rho}\right)\,;\]
	\[u_{m,n}^{sc}(x_1,x_2)=\sin(mx_1)\cos\left(\frac{nx_2}{\rho}\right)\,;\]
	\[u_{m,n}^{ss}(x_1,x_2)=\sin(mx_1)\sin\left(\frac{nx_2}{\rho}\right)\,.\]
To be more precise, the family consisting of all the above functions that are non-zero is an orthogonal basis of $L^2(\mathbb{T}^2_{\rho})\,$. Let us note that the eigenspace associated with the eigenvalue $\lambda$ is spanned by all the functions in this basis such that the corresponding pair of indices $(m,n)$ satisfies $\lambda=m^2+\frac{n^2}{\rho^2}\,$. If $\rho^2$ is a rational number, a large eigenvalues can have a very high multiplicity, and an associated eigenfunction can possess a very complex nodal structure (see for instance \cite{BouRud11}).

We recall that for any eigenfunction $u$ of $-\Delta\,$, we call \emph{nodal set} the closed set $\mathcal{N}(u)=u^{-1}(\{0\})$ and \emph{nodal domain} a connected component of $\mathbb{T}_{\rho}^2\setminus \mathcal{N}(u)\,$. We will prove the following statements.

\begin{thm} 
\label{thmEven}
If $\rho^2$ is irrational or $\rho=1$, any non-constant eigenfunction $u$ of $-\Delta$ has an even number of nodal domains. More precisely, we can divide the nodal domains of $u$ into pairs of isometric domains, $u$ being positive on one domain of each pair and negative on the other.
\end{thm}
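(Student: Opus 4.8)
The plan is to prove the statement by producing, for each non-constant eigenfunction $u$, an isometry $\tau$ of $\mathbb{T}^2_{\rho}$ --- in fact a translation --- with the property $u\circ\tau=-u$. Once such a $\tau$ is available everything follows: since $\tau$ is a homeomorphism and $\mathcal{N}(u\circ\tau)=\tau^{-1}(\mathcal{N}(u))$, the identity $u\circ\tau=-u$ forces $\tau^{-1}(\mathcal{N}(u))=\mathcal{N}(u)$, so $\tau$ permutes the nodal domains. If $u>0$ on a domain $\Omega$, then $u(\tau(x))=-u(x)<0$ for $x\in\Omega$, so $\tau(\Omega)$ is a domain on which $u<0$; thus $\tau$ restricts to a bijection from the positive domains onto the negative domains, with inverse $\tau^{-1}$. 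Pairing each positive domain $\Omega$ with the isometric domain $\tau(\Omega)$ then partitions all nodal domains into pairs of the required type, and in particular their number is even. I want to stress that $\tau$ need \emph{not} be an involution for this: a bijection between the positive and the negative domains is all that is needed.

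The whole problem thus reduces to constructing $\tau$, and for this I would first record how half-period-type translations act on the basis. A translation by $(a_1,a_2)$ sends $\cos(mx_1)$ to $\cos(mx_1+ma_1)$ and $\sin(mx_1)$ to $\sin(mx_1+ma_1)$, and similarly in $x_2$; hence, provided the phases satisfy $ma_1\in\pi\ZZ$ and $na_2/\rho\in\pi\ZZ$, say $ma_1=\pi p$ and $na_2/\rho=\pi q$, each of $u^{cc}_{m,n},u^{cs}_{m,n},u^{sc}_{m,n},u^{ss}_{m,n}$ is multiplied by the \emph{common} scalar $(-1)^{p+q}$. The task is to choose $(a_1,a_2)$ so that this scalar equals $-1$ simultaneously for every index pair $(m,n)$ occurring in the eigenspace of the eigenvalue $\lambda$ of $u$.

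When $\rho^2$ is irrational this is immediate, because $m^2+n^2/\rho^2=\lambda$ then has a unique solution $(m,n)\in\NN^2$ (two solutions would force $n'^2-n^2=(m^2-m'^2)\rho^2$, an integer equal to an integer times an irrational, hence both sides vanish). So the eigenspace sits over a single pair $(m,n)\neq(0,0)$, one of $m,n$ is $\geq 1$, and shifting that coordinate by half of its own period (e.g. $\tau=(0,\rho\pi/n)$ when $n\geq 1$, which gives phase $\pi$ and scalar $-1$) does the job. The heart of the matter, and the step I expect to be the main obstacle, is the case $\rho=1$, where $\lambda=m^2+n^2$ may have many representations and $\tau$ must act as $-1$ on all of them at once.

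Here I would exploit the $2$-adic structure of representations as sums of two squares. Writing $\lambda=4^{k}\mu$ with $\mu\not\equiv 0\pmod 4$, one checks that $\mu\equiv 1$ or $\mu\equiv 2\pmod 4$ (since $\mu\equiv 3\pmod4$ is never a sum of two squares), and that every representation $m^2+n^2=\lambda$ satisfies $2^{k}\mid m$ and $2^{k}\mid n$; moreover, with $a=m/2^{k}$ and $b=n/2^{k}$ one has $a^2+b^2=\mu$, so exactly one of $a,b$ is odd when $\mu$ is odd, and both are odd when $\mu\equiv 2\pmod 4$. I would establish this by comparing $v_2(m^2+n^2)$ with $v_2(\lambda)$: the valuation of the sum is even when $v_2(m)\neq v_2(n)$ and odd when $v_2(m)=v_2(n)$, which pins down the common valuation. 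Consequently $\tau=(\pi/2^{k},\pi/2^{k})$ yields the uniform scalar $(-1)^{a+b}=-1$ when $\mu$ is odd, while $\tau=(\pi/2^{k},0)$ yields $(-1)^{a}=-1$ when $\mu\equiv 2\pmod4$; in either case $u\circ\tau=-u$ holds on the whole eigenspace, and the pairing argument of the first paragraph concludes. The delicate point is precisely this verification that one fixed power of $2$ divides every representation, which is exactly what lets a single translation work for all of them simultaneously.
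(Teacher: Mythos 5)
Your proposal is correct and takes essentially the same route as the paper: your reduction to a translation $\tau$ with $u\circ\tau=-u$ is exactly Proposition \ref{propAS}, and your $2$-adic valuation analysis of the representations $m^2+n^2=\lambda$ (even valuation forces $2^k\mid m,n$ with exactly one odd quotient, odd valuation forces both quotients odd) is precisely the paper's Lemma \ref{lemArithm}, with the same choice of translation vectors $(\pi/2^k,\pi/2^k)$ and $(\pi/2^k,0)$. Your explicit verification of uniqueness in the irrational case and your remark that $\tau$ need not be an involution are minor refinements, not a different argument.
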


\begin{prop} 
\label{prop3Dom}
If $\rho=\frac{1}{\sqrt{3}}\,$, there exists an eigenfunction of $-\Delta$ with three nodal domains.
\end{prop}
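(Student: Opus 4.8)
The plan is to produce the required eigenfunction explicitly, as a combination of two modes sharing the eigenvalue $\lambda=4$, and then to analyse its nodal set by hand. Since $\rho=\frac{1}{\sqrt3}$ gives $\frac{1}{\rho^2}=3$, the eigenvalues are $\lambda_{m,n}=m^2+3n^2$, and $\lambda=4$ is attained both at $(m,n)=(2,0)$ and at $(m,n)=(1,1)$; as no other pair realises it, the associated eigenspace has dimension exactly $6$, ample room to build something with an odd nodal count. I would first pass to the coordinates $(y,z)=(x_1,x_2/\rho)=(x_1,\sqrt3\,x_2)$, in which $\mathbb{T}^2_\rho$ becomes the standard square torus $(\RR/2\pi\ZZ)^2$. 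Nodal domains are purely topological objects, unaffected by this rescaling, so I may work on the square torus throughout.

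My candidate is
\[ u=u_{2,0}^{cc}+2\,u_{1,1}^{cc}=\cos(2x_1)+2\cos(x_1)\cos(\sqrt3\,x_2), \]
which in the coordinates $(y,z)$ reads $u=\cos 2y+2\cos y\cos z$. Writing $\cos 2y=2\cos^2y-1$ and setting $t=\cos y$, the nodal condition becomes the quadratic $2t^2+2t\cos z-1=0$, whose roots are $t_\pm(z)=\tfrac12\bigl(-\cos z\pm\sqrt{\cos^2 z+2}\bigr)$. For each fixed $z$ I would determine which of these roots lies in $[-1,1]$, hence is realised by some $y$ with $\cos y=t_\pm$: a short computation shows the ``$+$'' branch is admissible exactly for $\cos z\ge-\tfrac12$ and the ``$-$'' branch exactly for $\cos z\le\tfrac12$, the thresholds being the values where $t_\pm=\pm1$.

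From this branch analysis I expect the nodal set to consist of exactly two disjoint simple closed curves: a loop $\Gamma_+$ from the ``$+$'' branch, confined to the strip $|z|\le\frac{2\pi}{3}$ and encircling $(0,0)$, and a loop $\Gamma_-$ from the ``$-$'' branch encircling $(\pi,\pi)$. The two are exchanged by the translation $(y,z)\mapsto(y+\pi,z+\pi)$, which leaves $u$ invariant, so it suffices to understand $\Gamma_+$ and transport the conclusion. Checking the gradient $\nabla u$ at the extreme points (for instance $\partial_z u=-\sqrt3\ne0$ at $(0,\frac{2\pi}{3})$) shows these curves are smooth, and evaluating $u$ at $(0,0)$, at $(\pi,\pi)$ and at an outside point such as $(\tfrac\pi2,\tfrac\pi2)$ shows $u>0$ inside each loop and $u<0$ on the (connected) complement. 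This yields three nodal domains: two positive ``islands'' and one negative ``sea''.

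The conceptual point, contrasting with Theorem~\ref{thmEven}, is that evenness there comes from a symmetry sending $u\mapsto-u$ and hence pairing positive domains with negative ones; here the only relevant symmetry, the $(\pi,\pi)$-translation, sends $u\mapsto u$, merely swapping the two positive islands while fixing the negative sea, so an odd total becomes possible. The main obstacle I anticipate is not the algebra but the global topology: one must rule out further components of the nodal set, verify that $\Gamma_\pm$ are genuinely simple and mutually disjoint, and confirm that the complement is connected. All of this should follow from the branch count together with the symmetry, but it is the step demanding the most care; the choice of the coefficient $2$ is precisely what keeps the two positive regions from touching or merging, and I would double-check that it lies in the admissible range.
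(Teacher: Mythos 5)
Your proposal is correct and takes essentially the same approach as the paper: your $u=u_{2,0}^{cc}+2\,u_{1,1}^{cc}$ is (up to scaling) the paper's $v_{\eps}=u_{1,1}^{cc}+\eps\, u_{2,0}^{cc}$ with $\eps=\tfrac12$, which lies in the paper's admissible range $0<\eps<1$, and your quadratic $2t^2+2t\cos z-1=0$ in $t=\cos y$ is exactly the paper's hyperbola $\xi_1\xi_2+\eps(2\xi_1^2-1)=0$ in the coordinates $\xi_1=-\cos x_1$, $\xi_2=-\cos(\sqrt{3}\,x_2)$. The only cosmetic difference is that you close the argument with the translation symmetry $(y,z)\mapsto(y+\pi,z+\pi)$ where the paper uses the two reflection symmetries, arriving at the same picture of two positive disk-like domains and one connected negative domain.
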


In \cite{Hof12}, T. Hoffmann-Ostenhof asked if there exists a torus $(\mathbb{R}/2\pi\mathbb{Z})\times(\mathbb{R}/2\pi\rho\mathbb{Z})\,$, with $\rho \in (0,1]\,$, for which some eigenfunction of the Laplacian has an odd number of nodal domains, at least equal to three. Proposition \ref{prop3Dom} answers the question positively, while Theorem \ref{thmEven} shows that such an eigenfunction does not exist when $\rho^2$ is irrational or $\rho=1\,$.

\paragraph{Acknowledgements} I thank Bernard Helffer for introducing me to this problem and for numerous discussions and corrections. I thank Thomas Hoffmann-Ostenhof and Susanna Terracini for their advice and encouragements. This work was partially supported by the ANR (Agence Nationale de la Recherche), project OPTIFORM n$^\circ$
ANR-12-BS01-0007-02, and by the ERC, project COMPAT n$^\circ$ ERC-2013-ADG.

\section{Proof of the theorem}
\label{secProof}

Let us outline the method we will use to prove Theorem \ref{thmEven}. Let us first note that to any vector $v=(v_1,v_2)\in \mathbb{R}^2\,$, we can associate a bijection $x \mapsto x+v$ from $\mathbb{T}^2_{\rho}$ to itself . It is defined in the following way: if $x=(x_1,x_2)$ in the standard coordinates, $x+v=(x_1+v_1\mbox{ mod } 2\pi,x_2+v_2\mbox{ mod } 2\rho\pi)\,$. We will prove the following result. 
\begin{prop}
\label{propAS}
If $\rho^2$ is irrational or $\rho=1$, and if $u$ is a non-constant eigenfunction of $-\Delta\,$ on $\mathbb{T}^2_{\rho}\,$, there exists $v_u\in \mathbb{R}^2$ such that $u(x+v_u)=-u(x)$ for all $x\in \mathbb{T}_{\rho}^2\,$.
\end{prop}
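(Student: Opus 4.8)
The plan is to pass to the Fourier representation of $u$ and reduce the existence of $v_u$ to a system of congruences indexed by the frequencies occurring in $u$. Writing $u$ in the orthogonal basis of complex exponentials, we have
\[ u(x_1,x_2) = \sum_{(m,k)\in\ZZ^2} c_{m,k}\, e^{i(m x_1 + k x_2/\rho)}, \]
where, because $u$ is an eigenfunction for some eigenvalue $\lambda$, the only nonzero coefficients occur for $(m,k)$ in the finite set $\Lambda_\lambda = \{(m,k)\in\ZZ^2 : m^2 + k^2/\rho^2 = \lambda\}$, and $c_{-m,-k}=\overline{c_{m,k}}$ since $u$ is real. For $v=(v_1,v_2)$ the translate is $u(x+v) = \sum_{(m,k)} c_{m,k}\, e^{i(m v_1 + k v_2/\rho)}\, e^{i(m x_1 + k x_2/\rho)}$, so by uniqueness of Fourier coefficients the identity $u(x+v)=-u(x)$ is equivalent to
\[ m v_1 + k\,\frac{v_2}{\rho} \equiv \pi \pmod{2\pi} \qquad \text{for every } (m,k)\in\ZZ^2 \text{ with } c_{m,k}\neq 0. \]
It therefore suffices to produce such a $v$ from the arithmetic of the frequency set, and since all admissible frequencies lie in $\Lambda_\lambda$, it is enough to solve these congruences for all of $\Lambda_\lambda$.

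In the irrational case the frequency set is tiny. If $\rho^2$ is irrational and $(m,k),(m',k')\in\Lambda_\lambda$, then $\rho^2(m^2-m'^2)=k'^2-k^2$ forces $m^2=m'^2$ and $k^2=k'^2$, so $\Lambda_\lambda\subseteq\{(\pm m_0,\pm k_0)\}$ for a single pair with $m_0,k_0\geq 0$; as $u$ is non-constant, $(m_0,k_0)\neq(0,0)$. If $m_0>0$ I would take $v=(\pi/m_0,0)$, so that $m v_1 + k v_2/\rho = \pm\pi\equiv\pi$ for every admissible frequency; if $m_0=0$ I would take $v=(0,\rho\pi/k_0)$. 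Either way all the congruences hold and $v_u$ exists.

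The case $\rho=1$ is where the real work lies, and it is purely a parity statement about the lattice points on the circle $m^2+k^2=\lambda$. From $m^2+k^2\equiv m+k \pmod 2$ one sees that $m+k\equiv\lambda\pmod 2$ for every $(m,k)\in\Lambda_\lambda$; refining modulo $4$ shows that if $\lambda\equiv 2\pmod 4$ then $m,k$ are both odd, while if $\lambda\equiv 0\pmod 4$ then $m,k$ are both even. The plan is to strip off the common power of two: letting $a=\lfloor \tfrac12\operatorname{ord}_2(\lambda)\rfloor$, an induction on $a$ using the mod-$4$ dichotomy shows that every $(m,k)\in\Lambda_\lambda$ is divisible by $2^a$ and that the reduced value $\mu=\lambda/4^a$ is either odd or $\equiv 2\pmod 4$. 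Writing $m=2^a m'$, $k=2^a k'$ with $m'^2+k'^2=\mu$, I would set $v=(\pi/2^a,\pi/2^a)$ when $\mu$ is odd, so that $m v_1 + k v_2 = (m'+k')\pi\equiv\pi$ because $m'+k'$ is odd, and $v=(\pi/2^a,0)$ when $\mu\equiv 2\pmod 4$, so that $m v_1 = m'\pi\equiv\pi$ because $m'$ is odd. In both subcases every required congruence holds, producing $v_u$.

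I expect the main obstacle to be the $\rho=1$ case: one must solve the congruence $m v_1 + k v_2 \equiv \pi$ simultaneously for \emph{all} points of $\Lambda_\lambda$, however many lattice points lie on the circle, and the clean route is the $2$-adic reduction above rather than attacking the linear system over $\RR/2\pi\ZZ$ directly. By contrast the irrational case is immediate once one observes that $\Lambda_\lambda$ contains at most the four points $(\pm m_0,\pm k_0)$. Finally, granting Proposition \ref{propAS}, Theorem \ref{thmEven} follows because the translation $x\mapsto x+v_u$ is an isometry of $\mathbb{T}^2_\rho$ that carries each positive nodal domain onto a negative one and conversely, pairing the domains isometrically as claimed.
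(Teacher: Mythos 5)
Your proposal is correct and follows essentially the same route as the paper: you reduce the problem to finding a single vector $v$ depending only on $\lambda$ (the paper's Lemma \ref{lemEigFunct}), and you arrive at the identical choices --- $v=(\pi/m,0)$ or $(0,\rho\pi/n)$ in the irrational case, and $v=(\pi/2^p,\pi/2^p)$ or $(\pi/2^p,0)$ according to the parity of the $2$-adic valuation of $\lambda$ when $\rho=1$. Your complex-exponential formulation (turning the verification into an equivalence with congruences modulo $2\pi$) and your induction stripping factors of $4$ from $\lambda$ are only mild repackagings of the paper's direct check on the real trigonometric basis together with its Lemma \ref{lemArithm}, which obtains the same common factor $2^p$ by comparing the $2$-adic valuations of $m$ and $n$.
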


Let us show that Proposition \ref{propAS} implies Theorem \ref{thmEven}. An eigenfunction $u$ being given, we define the bijection $\sigma: x\mapsto x+v_u$ from $\mathbb{T}_{\rho}^2$ to itself. It is an isometry that preserves $\mathcal{N}(u)\,$, and exchanges the nodal domains on which $u$ is positive with those on which $u$ is negative. This proves Theorem \ref{thmEven}.  

Let us now turn to the proof of Proposition \ref{propAS}. Let us first consider the case where $\rho^2$ is irrational, and let $\lambda$ be a non-zero eigenvalue of $-\Delta$. Since $\rho^2$ is irrational there exists a unique pair of integers $(m,n)$, different from $(0,0)\,$, such that $\lambda=m^2+\frac{n^2}{\rho^2}\,$. The eigenspace associated with $\lambda$ is therefore spanned by the functions $u^{cc}_{m,n}\,$, $u^{cs}_{m,n}\,$, $u^{sc}_{m,n}\,$, and $u^{ss}_{m,n}\,$. Let us assume that $m>0$ and let us set $v=(\pi/m,0)$. It is then immediate to check that, for all $x$ in $\mathbb{T}^2_{\rho}$, $u(x+v)=-u(x)$ when $u$ is any of the basis functions $u^{cc}_{m,n}\,$, $u^{cs}_{m,n}\,$, $u^{sc}_{m,n}\,$, and $u^{ss}_{m,n}\,$. As a consequence we still have $u(x+v)=-u(x)$ when $u$ is any linear combination of the previous basis functions, that is to say any eigenfunction associated with $\lambda$\,. If $m=0$, we have $n>0$ and the same holds true with $v=(0,\rho\pi/n)\,$. This conclude the proof of Proposition \ref{propAS} in the irrational case. 

Let us now consider the case $\rho=1$. As in the previous case, we will prove a statement that is slightly more precise than Proposition \ref{propAS}: we will exhibit, for any non-zero eigenvalue $\lambda$, a vector $v\in \mathbb{R}^2$ such that $u(x+v)=-u(x)$ for every eigenfunction $u$ associated with $\lambda$ (see Lemma \ref{lemEigFunct}). The difference in this case is that the equality $\lambda=m^2+n^2$ can be satisfied for several pairs of integers $(m,n)$. To overcome this difficulty, we will need the following simple arithmetical lemma. This result is stated and proved in \cite{Hof15}, where it is used to solve a closely related problem: proving that a non-constant eigenfunction of the Laplacian on the square with a Neumann or a periodic boundary condition must take the value $0$ on the boundary. We nevertheless give a proof of the lemma here for the sake of completeness.

\begin{lem}
\label{lemArithm}
	Let $(m,n)$ be a pair of non-negative integers, with $(m,n)\neq (0,0)\,$, and let us write $\lambda=m^2+n^2\,$. If $\lambda=2^{2p}(2q+1)$ with $(p,q)\in\mathbb{N}^2\,$, then $m=2^pm_0$ and $n=2^pn_0\,$, where exactly one of the integers $m_0$ and $n_0$ is odd. If on the other hand $\lambda=2^{2p+1}(2q+1)$ with $(p,q)\in\mathbb{N}^2\,$, then $n=2^pm_0$ and $n=2^pn_0\,$, where both integers $m_0$ and $n_0$ are odd.
\end{lem}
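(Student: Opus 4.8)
The plan is to reduce everything to the elementary fact that an odd square is congruent to $1$ modulo $4$ while an even square is congruent to $0$ modulo $4$, after first stripping off the largest power of $2$ common to $m$ and $n$. First I would let $c$ denote the smaller of the two $2$-adic valuations of $m$ and $n$ (with the convention that a vanishing entry has infinite valuation, which causes no trouble since $(m,n)\neq(0,0)$), and write $m=2^c m_0$ and $n=2^c n_0$. By the very choice of $c$, at least one of $m_0$ and $n_0$ is odd, and we obtain the factorization $\lambda=2^{2c}(m_0^2+n_0^2)$.

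Next I would pin down the $2$-adic valuation of the cofactor $m_0^2+n_0^2$ by a single congruence modulo $4$, distinguishing the only two possibilities left open by the choice of $c$. If exactly one of $m_0,n_0$ is odd, then $m_0^2+n_0^2\equiv 1\pmod 4$ is odd, so the valuation of $\lambda$ is exactly $2c$; writing $p=c$ and $2q+1=m_0^2+n_0^2$ then puts us in the first case of the lemma, with exactly one of $m_0,n_0$ odd, as required. If on the other hand both $m_0$ and $n_0$ are odd, then $m_0^2+n_0^2\equiv 2\pmod 4$, so it equals $2$ times an odd number, the valuation of $\lambda$ is exactly $2c+1$, and writing $p=c$ puts us in the second case, with both $m_0,n_0$ odd.

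Since these two congruence outcomes correspond precisely to the even and to the odd $2$-adic valuations of $\lambda$ appearing in the two hypotheses, and since the valuation of $\lambda$ determines $p$ uniquely, the implication can be read in the required direction: the hypothesis that $\lambda=2^{2p}(2q+1)$ forces the even (hence ``exactly one odd'') branch and yields $c=p$, while the hypothesis $\lambda=2^{2p+1}(2q+1)$ forces the odd (hence ``both odd'') branch and again yields $c=p$. The only points needing care — and the closest thing to an obstacle — are purely bookkeeping: confirming that the minimal valuation $c$ genuinely matches the exponent $p$ in each case, which is exactly what the computation modulo $4$ delivers, and checking the degenerate situations in which one of $m,n$ vanishes, where the cofactor reduces to a single odd square and falls automatically into the first case.
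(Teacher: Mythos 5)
Your proposal is correct and follows essentially the same route as the paper: both strip off the largest power of $2$ dividing both $m$ and $n$ (the paper writes $m=2^rm_1$, $n=2^sn_1$ with $r\le s$ and factors out $2^{2r}$, which is exactly your $c=\min$ of the $2$-adic valuations) and then determine the parity of the cofactor, where your single congruence $m_0^2+n_0^2\equiv 1$ or $2\pmod 4$ is just a compressed version of the paper's explicit expansion $m_1^2+n_1^2=4(m_2^2+n_2^2+m_2+n_2)+2$. Your explicit appeal to the uniqueness of the decomposition $\lambda=2^t\cdot(\text{odd})$ to read the implication in the stated direction is a point the paper leaves implicit, but it is the same argument.
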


\begin{proof}
From the decomposition into prime factors, we deduce that we can write any positive integer $N$ as $N=2^{t}N_1\,$, with $t$ a non-negative and $N_1$ an odd integer. Let us first consider the case where $m$ or $n$ is zero. Without loss of generality, we can assume that $n=0\,$. We write $m=2^{r}m_1\,$. We are in the case $\lambda=2^{2p}(2q+1)$ with $p=r$ and $2q+1=m_1^2\,$, and we obtain the desired result by setting $m_0=m_1$ (odd) and $n_0=0$ (even). We now assume that both $m$ and $n$ are positive. We write $m=2^r m_1$ and $n=2^s n_1$ with $m_1$ and $n_1$ odd integers. Without loss of generality, we can assume that $r \le s\,$. We find $\lambda = 2^{2r}(m_1^2+2^{2(s-r)}n_1^2)\,$. If $r<s\,$, then $m_1^2+2^{2(s-r)}n_1^2$ is an odd integer, and we have $\lambda=2^{2p}(2q+1)\,$, with $p=r$ and $2q+1=m_1^2+2^{2(s-r)}n_1^2\,$. In that case, we set $m_0=m_1$ (odd) and $n_0=2^{s-r}n_1$ (even). If $r=s\,$, we find 
$\lambda=2^{2r}(m_1^2+n_1^2)\,$. We have furthermore $m_1=2m_2+1$ and $n_1=2n_2+1$, and therefore $m_1^2+n_1^2= 4(m_2^2+n_2^2+m_2+n_2)+2\,$ . We have $\lambda=2^{2p+1}(2q+1)$ with $p=r$ and $q=m_2^2+n_2^2+m_2+n_2\,$, and we set $m_0=m_1$ and $n_0=n_1\,$. 
\end{proof}

\begin{lem}
\label{lemEigFunct}
 Let $\lambda$ be a non-zero eigenvalue of $-\Delta\,$ on $\mathbb{T}^2_1$\,.
\begin{enumerate}[i.]
	\item If $\lambda=2^{2p}(2q+1)\,$, we set $v=(\pi/2^p,\pi/2^p)\,$, and we have $u(x+v)=-u(x)$ for every eigenfunction $u$ associated with $\lambda\,$.
	\item If $\lambda=2^{2p+1}(2q+1)\,$, we set $v=(\pi/2^p,0)\,$, and we have $u(x+v)=-u(x)$ for every eigenfunction $u$ associated with $\lambda\,$.
\end{enumerate}
\end{lem}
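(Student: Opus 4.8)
The plan is to reduce the statement to a verification on the basis eigenfunctions and then to read off the effect of the translation by $v$ from the behaviour of $\cos$ and $\sin$ under a shift of their argument. Since the eigenspace associated with $\lambda$ is spanned by the functions $u^{cc}_{m,n}$, $u^{cs}_{m,n}$, $u^{sc}_{m,n}$, $u^{ss}_{m,n}$ over all pairs $(m,n)$ with $m^2+n^2=\lambda\,$, and since the relation $u(x+v)=-u(x)$ is linear in $u\,$, it suffices to establish this relation for each such basis function.

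First I would record the elementary fact that a shift of argument by an integer multiple of $\pi$ multiplies both $\cos$ and $\sin$ by the same sign: if $k\in\ZZ\,$, then $\cos(\theta+k\pi)=(-1)^k\cos\theta$ and $\sin(\theta+k\pi)=(-1)^k\sin\theta\,$. Writing $v=(a,b)$ and applying this separately in the $x_1$- and the $x_2$-variable, I obtain that whenever $ma\in\pi\ZZ$ and $nb\in\pi\ZZ\,$, every one of the four basis functions indexed by $(m,n)$ transforms according to the single rule $w(x+v)=(-1)^{ma/\pi}(-1)^{nb/\pi}\,w(x)\,$, independently of the choice of $\cos$ or $\sin$ in each variable. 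The whole task is therefore to check that the combined sign $(-1)^{ma/\pi+nb/\pi}$ equals $-1$ for every pair $(m,n)$ with $m^2+n^2=\lambda\,$.

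This is exactly where Lemma \ref{lemArithm} enters. In case i, with $v=(\pi/2^p,\pi/2^p)\,$, the lemma gives $m=2^pm_0$ and $n=2^pn_0\,$, so that $ma=m_0\pi$ and $nb=n_0\pi$ lie in $\pi\ZZ$ and the combined sign is $(-1)^{m_0+n_0}\,$; since the lemma asserts that exactly one of $m_0,n_0$ is odd, $m_0+n_0$ is odd and the sign is $-1\,$. In case ii, with $v=(\pi/2^p,0)\,$, the lemma gives $m=2^pm_0$ with $m_0$ odd, so $ma=m_0\pi\in\pi\ZZ$ and $nb=0\,$, and the combined sign is $(-1)^{m_0}=-1\,$. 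In either case the sign equals $-1$ for all admissible $(m,n)$ at once, which lets the conclusion pass from the individual basis functions to an arbitrary eigenfunction associated with $\lambda\,$.

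There is no serious obstacle once Lemma \ref{lemArithm} is in hand; the only delicate point is that the sign must be $-1$ \emph{uniformly} over all pairs $(m,n)$ contributing to the eigenspace, and not merely for one of them, so that the negation is preserved under linear combination. This uniformity is precisely the content of the arithmetical lemma, and the fact that $v$ can be chosen depending only on the exponent $p$ attached to $\lambda$ is what makes the single vector $v$ work for the whole eigenspace.
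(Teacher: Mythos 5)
Your proof is correct and follows essentially the same route as the paper: reduce to the basis functions by linearity, apply Lemma \ref{lemArithm} to write $m=2^pm_0$, $n=2^pn_0$, and check that the translation by $v$ produces the sign $-1$ uniformly over all pairs $(m,n)$ with $m^2+n^2=\lambda$. Your unified sign formula $(-1)^{ma/\pi}(-1)^{nb/\pi}$ is a slightly tidier packaging of the paper's case-by-case verification of the four basis functions, but the argument is the same.
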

\begin{proof} Let us first consider the case where $\lambda=2^{2p}(2q+1)\,$. Let us choose a pair of indices $(m,n)$ such that $\lambda=m^2+n^2\,$, and let us consider one of the associated basis functions given in the introduction, say $u^{cc}_{m,n}(x,y)=\cos(mx)\cos(ny)$ to fix the ideas. According to Lemma \ref{lemArithm}, we have $m=2^{p}m_0$ and $n=2^p n_0\,$, where exactly one of the integers $m_0$ and $n_0$ is odd. We can assume, without loss of generality, that $m_0$ is odd and $n_0$ even. Then
\[\cos\left(m\left(x_1+\frac{\pi}{2^p}\right)\right)=\cos(mx_1+m_0 \pi)=-\cos(mx_1)\,,\]
\[\cos\left(n\left(x_2+\frac{\pi}{2^p}\right)\right)=\cos(mx_2+n_0 \pi)=\cos(nx_2)\,,\]
and therefore
\[u^{cc}_{m,n}\left(x+v\right)=-u_{m,n}^{cc}(x)\,.\]
We show in the same way that $u^{cs}_{m,n}(x+v)=-u^{cs}_{m,n}(x)\,$, $u^{sc}_{m,n}(x+v)=-u^{sc}_{m,n}(x)\,$, and $u^{ss}_{m,n}(x+v)=-u^{ss}_{m,n}(x)\,$. Since $v$ depends only on $\lambda\,$, we have $u(x+v)=-u(x)$ as soon as $u$ is a basis function associated with $\lambda\,$, and therefore, by linear combination, as soon as $u$ is an eigenfunction associated with $\lambda\,$.

The case $\lambda=2^{2p+1}(2q+1)$ can be treated in the same way, taking $v=(\pi/2^p,0)$ ($v=(0,\pi/2^p)$ would also be suitable). \end{proof}

\begin{rmk} \label{rmkEven} It can also be shown that Lemma \ref{lemArithm} still holds if we replace the equation
$\lambda=m^2+n^2$ by 
$\lambda=\alpha m^2+ \beta n^2\,$, where $\alpha$ and $\beta$ are odd integers such that $\alpha+\beta=2 \mbox{ mod }4\,$. This implies that the conclusion of Theorem \ref{thmEven} still holds if $\rho =\sqrt{\frac{\alpha}{\beta}}$, with $\alpha$ and $\beta$ as above.
\end{rmk}

\section{Proof of the proposition}
\label{secProofProp}

In this section, we assume that $\rho=\frac{1}{\sqrt{3}}\,$. Let us outline the idea we will use to construct the eigenfunction whose existence is asserted in Proposition \ref{prop3Dom}. It will belong to the eigenspace associated with the eigenvalue $4\,$. We start from the eigenfunction 
\[u_{1,1}^{cc}(x_1,x_2)=\cos(x_1)\cos\left(\sqrt{3}x_2\right)\,,\]
which has four rectangular nodal domains, shown in Figure \ref{fig1a}. 
\begin{figure}
	\begin{center}
		\subfigure[$\mathcal{N}(u_{1,1}^{cc})$\label{fig1a}]{\includegraphics[width=4cm]{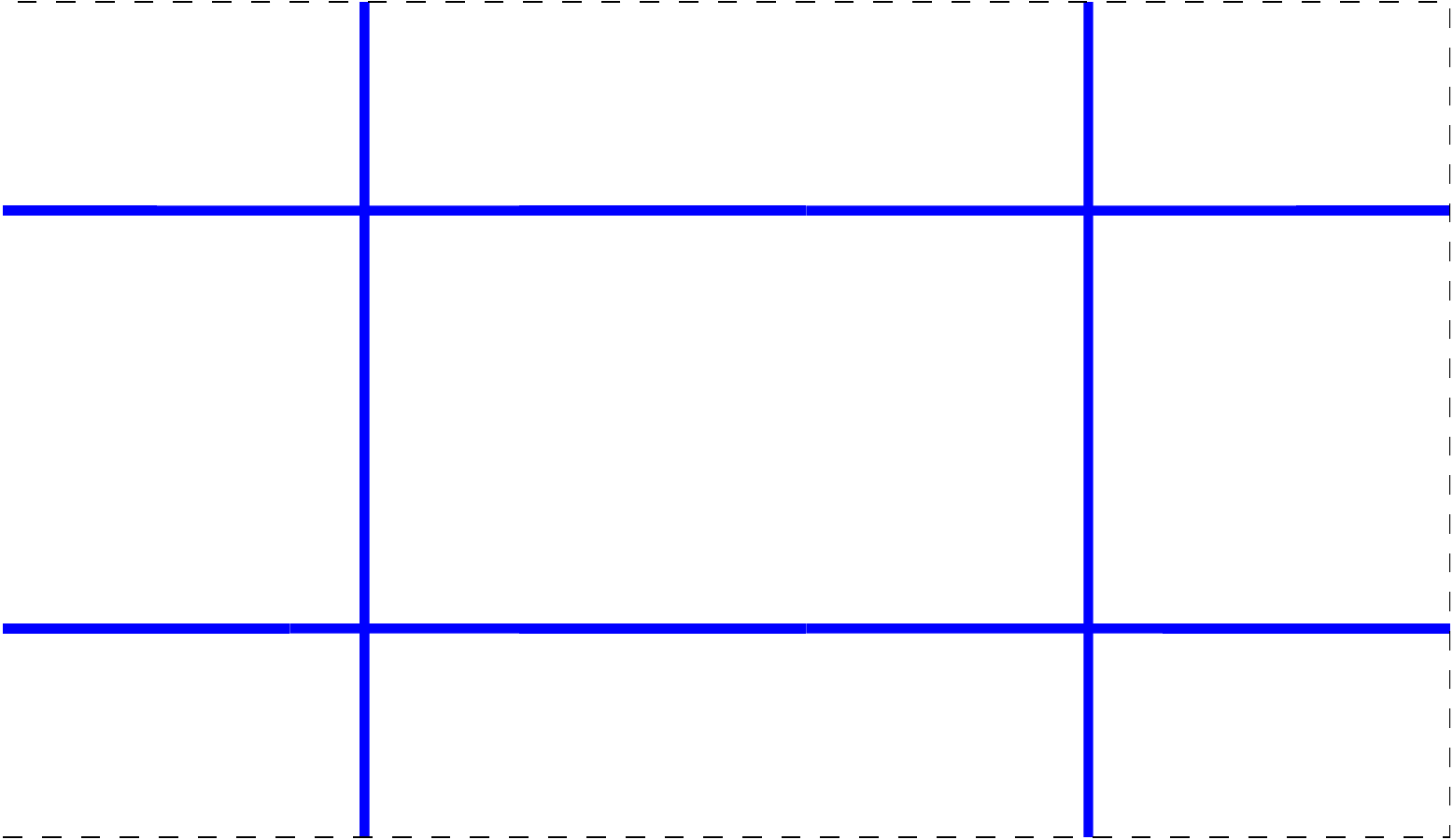}}
		\hspace{2cm}
		\subfigure[$\mathcal{N}(u_{2,0}^{cc})$\label{fig1b}]{\includegraphics[width=4cm]{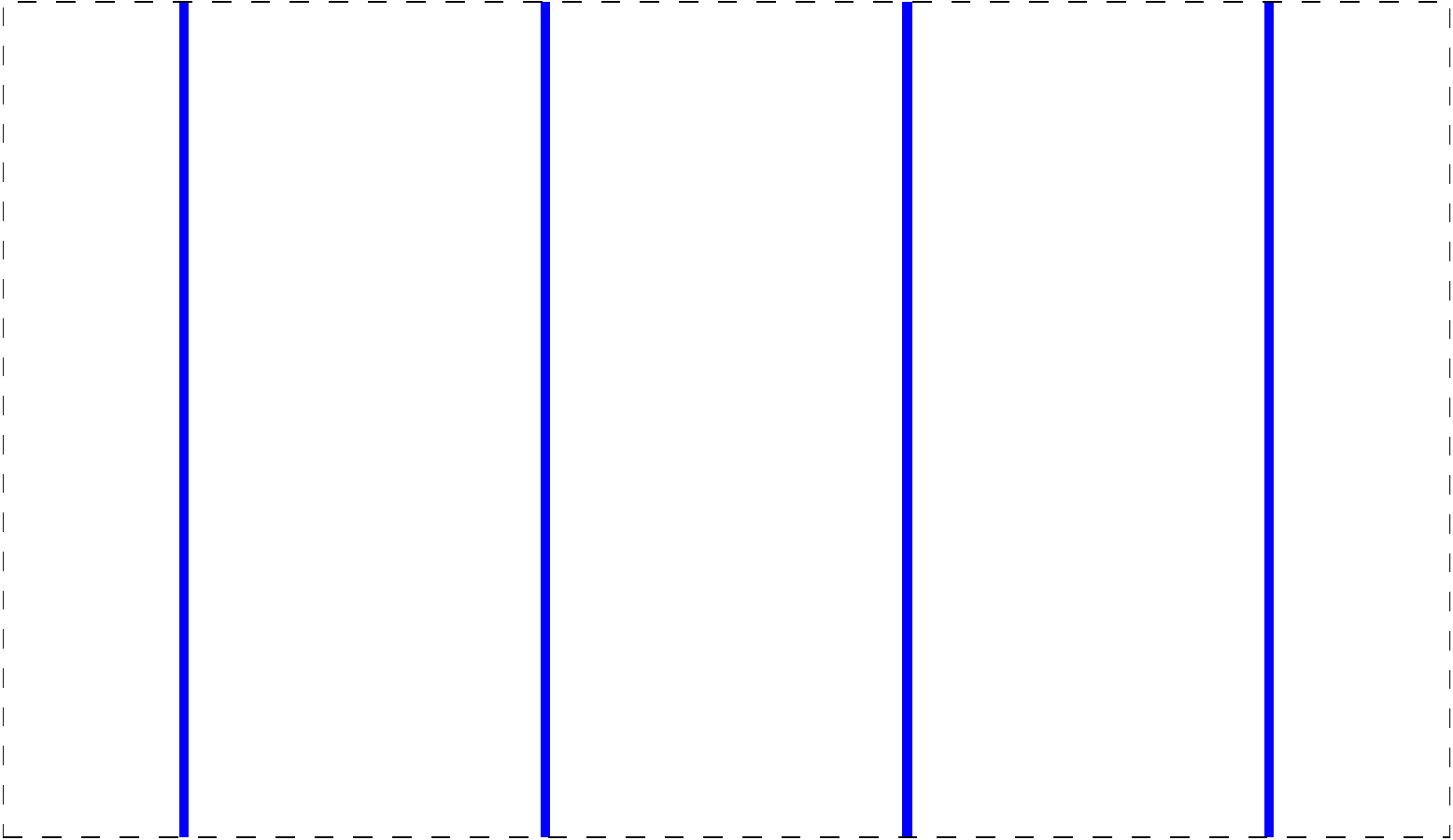}}
		\caption{Nodal sets of basis functions\label{fig1}}
	\end{center}
\end{figure}
We perturb this eigenfunction by adding a small multiple of the eigenfunction 
\[u_{2,0}^{cc}(x_1,x_2)=\cos(2x_1)\,.\]
For $\eps>0\,$, we get the eigenfunction
\[v_{\eps}(x_1,x_2)=u_{1,1}^{cc}(x_1,x_2)+\eps u_{2,0}^{cc}(x_1,x_2)\,.\]
Since $u_{2,0}^{cc}\,$ is negative in the neighborhood of the critical points in  $\mathcal{N}(u_{1,1}^{cc})$\,, adding $\eps u_{2,0}^{cc}$ has the effect of opening small "channels" that connect the nodal domains where $u_{1,1}^{cc}$ is negative. As a result, if  $\eps>0$ is small enough, $v_{\eps}$ has three nodal domains, one where it is negative and two where it is positive (see Figure \ref{fig2}). Let us note that these ideas have already been used, to construct examples of eigenfunctions whose nodal set satisfies some prescribed properties, in \cite{Ste25,Lew77,BerHel15Square}. In particular, the desingularization of critical points in the nodal set, that we have briefly described, is studied in details in \cite[6.7]{BerHel15Square}
\begin{figure}
	\begin{center}
		\includegraphics[width=6cm]{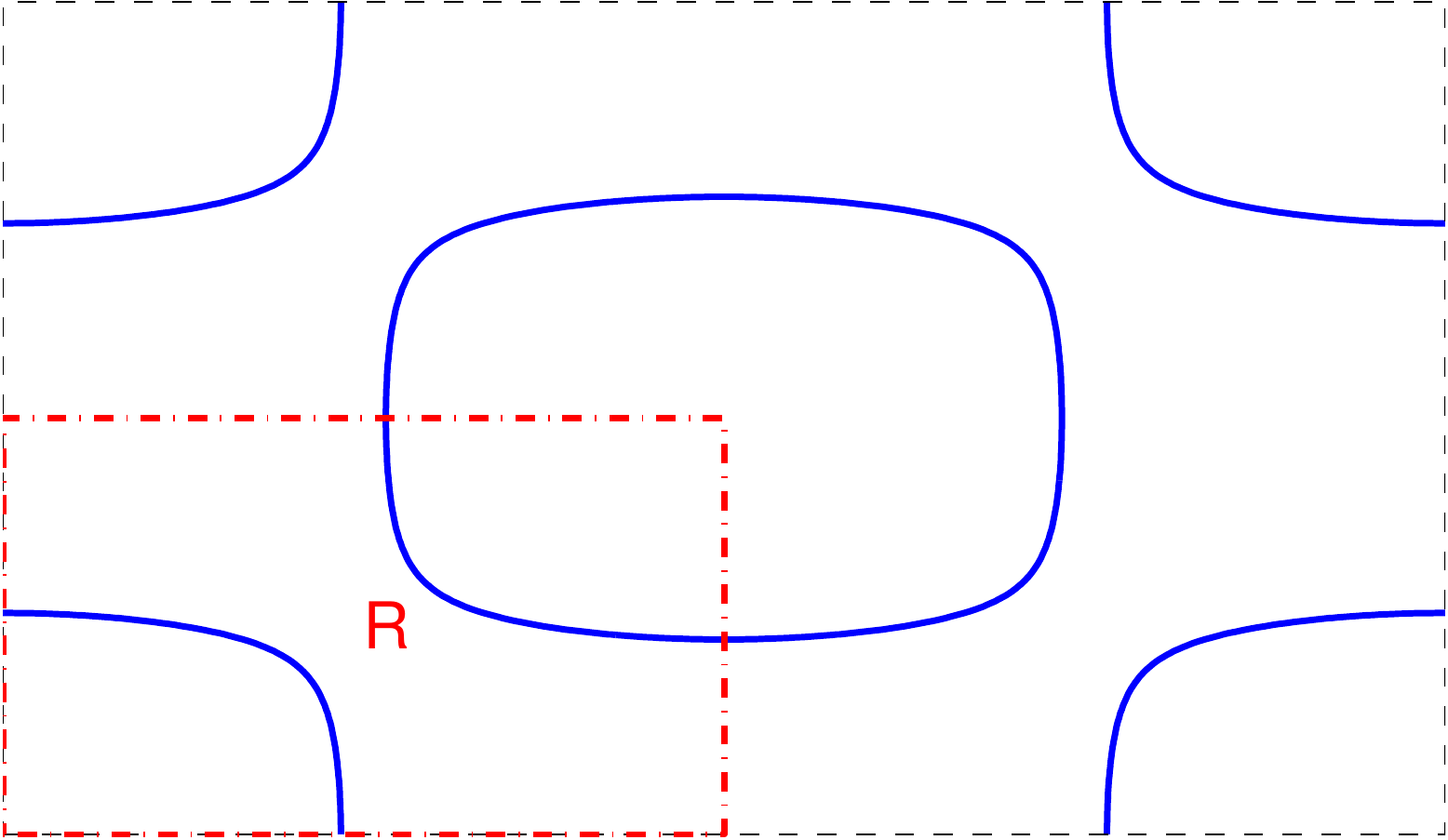}
		\caption{Nodal set of $v_{\eps}=u_{1,1}^{cc}+\eps u_{2,0}^{cc}$ with $\eps=0.1$\label{fig2}}
	\end{center}
\end{figure}

To prove rigorously these assertions, let us consider the open domain $R$ in $\mathbb{T}^2_{\rho}$ defined, in the standard coordinates $(x_1,x_2)$, as
\[R=]0,\pi[\times\left]0,\frac{\pi}{\sqrt{3}}\right[\,.\]
We now define (following \cite{BerHel15Square}) the smooth change of coordinates
\[\left\{\begin{array}{lcl}
 \xi_1&=&-\cos(x_1),\\
 \xi_2&=&-\cos(x_2),\\
 		\end{array}
 	\right.\]
which sends $R$ to $]-1,1[\times]-1,1[$\,.
In these new coordinates the nodal set of the function $v_{\eps}$ satisfies the equation
\[\xi_1\xi_2+\eps(2\xi_1^2-1)=0\,.\]
A simple computation shows that this is the equation of an hyperbola. Furthermore, when $0<\eps<1$, this hyperbola has one branch in the lower left quadrant $]-1,0[\times]-1,0[$ and one in the upper right quadrant $]0,1[\times]0,1[\,$.

On the other hand, we have the following symmetries of $v_{\eps}$:
 \[v_{\eps}(2\pi-x_1,x_2)=v_{\eps}(x_1,x_2)\]
 and
 \[v_{\eps}(x_1,2\pi-x_2)=v_{\eps}(x_1,x_2).\]
 This allows us to recover the nodal set on the whole of $\mathbb{T}^2_{\rho}\,$. It consists in two simple closed curves, each containing one branch of the previously considered hyperbola. Each of these closed curves enclose a nodal domain that is homeomorphic to a disk.  Let us now consider the complement of the closure of those two region. It is the third nodal domain.
 
\begin{rmk}
\label{rmkOdd}
 The construction used to prove Proposition \ref{prop3Dom} can obviously be generalized. We can for instance consider the eigenfunction 
\[v_{\eps}(x_1,x_2)=u^{cc}_{m,n}(x_1,x_2)+\eps u^{c,c}_{km,0}(x_1,x_2)\,,\]
assuming that 
\[m^2+\frac{n^2}{\rho^2}=k^2m^2\,.\]
We have in that case $\rho=\frac{n}{m\sqrt{k^2-1}}\,$. Following the same line of reasoning as in this section, we see that for $\eps>0$ small enough, $v_{\eps}$ has $2mn+1$ nodal domains.  
\end{rmk} 

In view of Remarks \ref{rmkEven} and \ref{rmkOdd}, it would be desirable to obtain a characterization of the rational numbers $q\,$, such that there exists an eigenfunction of $-\Delta$ on the torus $\mathbb{T}_{\sqrt{q}}^2$ with an odd number of nodal domains. Unfortunately, we have not been able to reach this goal so far.

\bibliographystyle{plain}

\begin{thebibliography}{1}

\bibitem{BerHel15Square}
P.~{B{\'e}rard} and B.~{Helffer}.
\newblock {A. Stern's analysis of the nodal sets of some families of spherical
  harmonics revisited}.
\newblock In S.~Kallel, N.~Mir, El~Kacimi, and A.~A., Baklouti, editors, {\em
  Analysis and Geometry, MIMS-GGTM, Tunis, Tunisia, March 2014. In Honour of
  Mohammed Salah Baouendi}, volume 127 of {\em Springer Proceedings in
  Mathematics {\&} Statistics}. Springer, 2015.
\newblock To appear.

\bibitem{BouRud11}
J.~Bourgain and Z.~Rudnick.
\newblock On the geometry of the nodal lines of eigenfunctions of the
  two-dimensional torus.
\newblock {\em Annales Henri Poincar{\'e}}, 12(6):1027--1053, 2011.

\bibitem{Hof12}
T.~{Hoffmann-Ostenhof}.
\newblock Geometric aspects of spectral theory, {P}roblem {S}ection (xv).
\newblock {\em Oberwolfach Rep.}, 9(3):2013--2076, 2012.

\bibitem{Hof15}
T.~{Hoffmann-Ostenhof}.
\newblock Eigenfunctions for 2-dimensional tori and for rectangles with neumann
  boundary conditions.
\newblock {\em Mosc. Math. J.}, 15(1):101--106, jan.--mar. 2015.

\bibitem{Lew77}
Hans Lewy.
\newblock On the minimum number of domains in which the nodal lines of
  spherical harmonics divide the sphere.
\newblock {\em Comm. Partial Differential Equations}, 2(12):1233--1244, 1977.

\bibitem{Ste25}
{Stern, A.}
\newblock {\em Bemerkungen {\"u}ber asymptotisches Verhalten von Eigenwerten
  und Eigenfunctionen.}
\newblock PhD thesis, {Universit{\"a}t G{\"o}ttingen}, 1925.
\newblock {Extracts and annotations available at
  \url{http://www-fourier.ujf-grenoble.fr/~pberard/R/stern-1925-thesis-partial-reprod.pdf}}.

\end{thebibliography}
{\small

}

\end{document}